\numberwithin{equation}{section}
\theoremstyle{plain}  
\newtheorem{thm}[equation]{Theorem}
\theoremstyle{definition}  
\newtheorem{remark}[equation]{Remark}
\newcommand{\wt}{\widetilde}
\newcommand{\Z}{\mathbb Z}
 \newcommand{\Wedge} {\vee}
\begin{document}

\pagestyle{plain}

\title
{The Morava $K$-theory of $BO(q)$ and $MO(q)$}
\author{Nitu Kitchloo}
\author{W. Stephen Wilson}
\address{Department of Mathematics, Johns Hopkins University, Baltimore, USA}
\email{nitu@math.jhu.edu}
\email{wsw@math.jhu.edu}
\thanks{Nitu Kitchloo is supported in part by NSF through grant DMS
  1307875.}

\date{\today}


{\abstract
We give an easy proof that the Morava $K$-theories for $BO(q)$
and $MO(q)$ are in even degrees.  Although this is a known result,
it had followed from a difficult proof that $BP^*(BO(q))$ was
Landweber flat.  Landweber flatness follows from the even Morava
$K$-theory.  We go further and compute an explicit description of
$K(n)_*(BO(q))$ and $K(n)_*(MO(q))$ and reconcile it with the
purely algebraic construct from Landweber flatness.
}

\maketitle


\section{Introduction}

We are concerned with
the (co)homology theory, Morava $K$-theory, $K(n)^*(-)$,
where $K(n)_* = \Z/2 [v_n^{\pm 1}]$
with the degree of $v_n$ equal to $2(2^n - 1 ) $ (we are
only concerned with $p=2$).

What brought us to the problem of computing the Morava $K$-theories
of the spaces $BO(q)$ was a real need to have $BP^*(BO(q))$ be 
Landweber flat
(in the sense \cite{Land:Hom}) for \cite{Kitch-Wil-BO}.
$BP^*(BO(q))$ had been computed in \cite{WSW:BO} and was shown to
be Landweber flat 
in \cite{KY}, with some seriously complex computations.
Kono and Yagita went on to show that 
$K(n)^*(BO(q))$ was concentrated in even degrees because $BP^*(BO(q))$
was.

The computation in \cite{KY} does not give an explicit answer to
what $K(n)^*(BO(q))$ is, only that it is even degree.  If it is known
that $K(n)^*(BO(q))$ is even degree for all $n$, then the results
of \cite{RWY} show that $BP^*(BO(q))$ is Landweber flat, without having to
compute it.

We present here an easy proof that $K(n)_*(BO(q))$ is even degree and
then
go further and give a basis.
Duality for Morava $K$-theory is straightforward, so $K(n)^*(BO(q))$
is also even degree.

\begin{thm}[\cite{KY}]
\label{even}

\ 

\begin{enumerate}[(i)]
\item
$K(n)^*(BO(q))$ and $K(n)^*(MO(q))$ are even degree for all $n$.
\item
$BP^*(BO(q))$ is Landweber flat.
\end{enumerate}
\end{thm}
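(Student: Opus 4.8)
The plan is to prove part (i) and to deduce part (ii) formally. Part (ii) costs nothing once (i) is known: the comparison theorem of \cite{RWY} reconstructs $BP^*(X)$, and in particular shows it is Landweber flat, from the tower $\{K(n)^*(X)\}_n$ as soon as every $K(n)^*(X)$ is concentrated in even degrees. So I concentrate on (i), and I would prove it by an outer induction on $q$ that reduces the whole statement to a single claim about the Thom spaces $MO(q)$.

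For the outer induction I would use the cofibre sequences
\[
\Sigma^\infty BO(q-1)_+\ \lra\ \Sigma^\infty BO(q)_+\ \lra\ MO(q),
\]
coming from the standard identification $BO(q)/BO(q-1)\simeq MO(q)$: the sphere bundle of $\gamma_q\to BO(q)$ is $S(\gamma_q)\simeq BO(q-1)$, so the Thom space $MO(q)=\mathrm{Th}(\gamma_q)$ is the cofibre of $BO(q-1)\simeq S(\gamma_q)\to D(\gamma_q)\simeq BO(q)$, with $MO(0)=\mathbb S$. Granting the key lemma below, that $\tilde K(n)_*(MO(q))$ is concentrated in even degrees, suppose inductively that $K(n)_*(BO(q-1))$ is even. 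Then in the long exact sequence of the cofibre sequence the connecting map $\tilde K(n)_i(MO(q))\to K(n)_{i-1}(BO(q-1))$ runs between even-degree groups while changing parity, so it vanishes; the long exact sequence collapses into short exact sequences of $K(n)_*$-modules, which split because $K(n)_*$ is a graded field. This simultaneously shows $K(n)_*(BO(q))$ is even and, reading off the splittings, exhibits an explicit basis as the union of a basis for $K(n)_*(BO(q-1))$ with one for $\tilde K(n)_*(MO(q))$. Passing to $K(n)^*$ is then the formal duality for Morava $K$-theory already used in the introduction.

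The heart of the matter is thus the key lemma: $\tilde K(n)^*(MO(q))$ is even, with an explicit basis. The base cases are immediate, since $MO(1)\simeq RP^\infty=B\Z/2$ and $\tilde K(n)^*(B\Z/2)$ is free on the even classes $x,x^2,\dots,x^{2^{n}-1}$. The genuine obstacle is that at $p=2$ there is no Thom isomorphism to lean on: $\gamma_q$ is not $K(n)$-orientable (already $w_1(\gamma_q)\ne 0$), so $\tilde K(n)^*(MO(q))$ is not merely a shift of $K(n)^*(BO(q))$ and must be attacked by hand. The two devices I would reach for are complexification — $\gamma_q\otimes\C\cong\gamma_q\oplus\gamma_q$ has a genuinely $K(n)$-orientable Thom space, related to $MO(q)\wedge MO(q)$ by the diagonal — and the orientation double cover $BSO(q)\to BO(q)$, under which $BO(q)\simeq BSO(q)_{h\Z/2}$ and $MO(q)$ becomes a $\Z/2$-homotopy quotient of the oriented Thom space $MSO(q)$; either route aims to import evenness (and a basis) from a place where Morava $K$-theory is visibly even. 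Getting one of these comparisons to yield both the parity statement and the explicit basis is the step I expect to be the real work.

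Granting the key lemma, the outer induction then finishes parts (i) and (ii) and produces the promised explicit description of $K(n)^*(BO(q))$; the last item advertised in the abstract — reconciling this geometric basis with the purely algebraic form of $BP^*(BO(q))$ that comes out of Landweber flatness via \cite{RWY} — is afterwards just a bookkeeping identification of the two.
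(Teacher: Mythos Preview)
Your reduction via the cofibre sequence $BO(q-1)_+ \to BO(q)_+ \to MO(q)$ is sound, and deducing (ii) from (i) via \cite{RWY} matches the paper. But you have not proven your key lemma, and you say so yourself: the routes through complexification or the orientation double cover are left as ``the real work,'' unworked. Neither is obviously adequate. The Thom diagonal gives a map $\mathrm{Th}(\gamma_q\oplus\gamma_q)\to MO(q)\wedge MO(q)$, and the source is $K(n)$-orientable since $\gamma_q\oplus\gamma_q\cong\gamma_q\otimes\C$; but knowing the cohomology of the source does not obviously constrain the parity of $\tilde K(n)^*(MO(q))$ --- the map goes the wrong way for that. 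The double cover trades the orientation problem for a $\Z/2$-homotopy quotient you would then have to analyze in Morava $K$-theory, which is not visibly simpler than the original question.

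The paper's argument sidesteps all of this with a single observation you are missing: $\tilde H_*(BO;\Z/2)=\bigoplus_{j\ge 1} M_j$ splits as a direct sum of modules over the Steenrod algebra, where $M_j$ is spanned by the length-$j$ monomials $b_{i_1}\cdots b_{i_j}$ (a Steenrod operation never changes the number of $b$'s). The first differential in the Atiyah--Hirzebruch spectral sequence for $K(n)_*$ is the Milnor primitive $Q_n$, and from \cite{RWY} one knows that for $BO$ this spectral sequence collapses after that one differential with an even $E_\infty$. Since $Q_n$ respects the $M_j$-decomposition, the $Q_n$-homology of each $M_j$ is individually even. But $\tilde H_*(BO(q))=\bigoplus_{j\le q} M_j$ and $\tilde H_*(MO(q))=M_q$ (the latter via the stable Mitchell--Priddy splitting), so the AHSS for each of $BO(q)$ and $MO(q)$ is even after the first differential and therefore collapses there too. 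No induction on $q$, no Thom isomorphism, no double cover: the missing idea is to borrow the known answer for the limit space $BO$ and then cut it along its Steenrod-module summands.
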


As mentioned, $(ii)$ follows directly from $(i)$ using \cite{RWY}
but Kono and Yagita prove $(ii)$ first and then $(i)$.
$(i)$ will be proven in Section 3.

We work with the homology version of the theories and have:

\newpage

\begin{thm}
\label{detail}
\ 
\begin{enumerate}[(i)]
\item
There are elements $b_{2i} \in K(n)_{2i}(BO(1))$ for $0 < i < 2^n$
coming from $K(n)_{2i}(RP^\infty)$.
\item
There are elements $c_{4i} \in K(n)_{4i}(BO(2))$ for $2^n \le i $.
\item
Using products from the standard maps $BO(i)\times BO(j) \rightarrow 
BO(i+j)$, a basis for the reduced homology, $\wt{K(n)}_*(BO(q))$, is:
$$
 \{b_{2i_1}b_{2i_2}\ldots b_{2i_k}
c_{4j_1}c_{4j_2}\ldots c_{4j_m}\}
\quad
0 < k+2m \le q.
$$
$$
0 < i_1 \le i_2 \le \ldots \le i_k < 2^n
\le j_1 \le j_2 \le \ldots \le j_m
$$
\item
$\wt{K(n)}_*(MO(q))$ is as above with $k+2m = q$.

\end{enumerate}
\end{thm}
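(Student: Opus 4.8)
The plan is to proceed in several stages, building everything from the cases $q=1$ and $q=2$ and then using the multiplicative structure of the spaces $BO(q)$. First I would establish $(i)$: since $BO(1) = RP^\infty$, the Morava $K$-theory $K(n)_*(RP^\infty)$ is well known to be free over $K(n)_*$ on classes $b_0, b_2, b_4, \ldots, b_{2(2^n-1)}$, with $b_0 = 1$ and the top class in degree $2(2^n-1)$; the relation comes from the fact that the $2$-series of the Honda formal group law for $K(n)$ is $[2](x) = v_n x^{2^n}$, so that $RP^\infty = BO(1)$ has Morava $K$-theory a truncated polynomial algebra. This gives the $b_{2i}$ for $0 < i < 2^n$. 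For $(ii)$, I would analyze $BO(2)$ using the fibration or the Gysin sequence associated to $RP^\infty = BO(1) \to BO(2) \to BO(2)/BO(1)$, or more directly via the splitting principle and the transfer; the point is to produce classes $c_{4i}$ for $i \geq 2^n$ living in the image of, or detected by, the symmetric square structure. The degree $4i$ is natural here because $BO(2)$ carries an $S^1$-bundle structure (complexification of the tautological $2$-plane bundle) whose Euler-type class lives in degree $4$, and the classes $c_{4i}$ should be the powers of this class beyond the range where $BO(1)$-classes account for everything.

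The heart of the argument is $(iii)$. The standard maps $BO(i) \times BO(j) \to BO(i+j)$ make $\bigoplus_q K(n)_*(BO(q))$ into a ring, and I would show that the proposed monomials $b_{2i_1}\cdots b_{2i_k} c_{4j_1}\cdots c_{4j_m}$ with $k + 2m \leq q$ span $\wt{K(n)}_*(BO(q))$ and are linearly independent. Spanning I expect to get by induction on $q$ together with a cell-structure or Atiyah--Hirzebruch argument: the cofiber sequences relating $BO(q-1)$, $BO(q)$, and $Th(\gamma_{q-1})$ (the Thom space, which is $MO(q-1)$ — this is where $(iv)$ feeds back in) let one peel off one factor at a time. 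Linear independence I would verify by pairing against Morava $K$-cohomology or by mapping to a product of copies of $BO(1)$ and $BO(2)$ via a maximal torus / maximal "$O(1)^a \times O(2)^b$" subgroup and checking that distinct monomials have distinct leading terms; the constraint $i_k < 2^n \leq j_1$ is exactly what makes the $b$'s and $c$'s interact without overlap, since $b_{2i}$ for $i \geq 2^n$ would be forced to zero or expressible via $c$'s. Finally $(iv)$: the Thom isomorphism for Morava $K$-theory (valid since $MO(q)$ is a Thom spectrum of a bundle which is $K(n)$-orientable after the usual twisting, or one uses the known structure of $K(n)_*(MO)$) identifies $\wt{K(n)}_*(MO(q))$ with the summand of $\wt{K(n)}_*(BO(q))$ complementary to $\wt{K(n)}_*(BO(q-1))$, which is precisely the span of monomials with $k + 2m = q$ exactly.

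I expect the main obstacle to be the linear independence in $(iii)$, specifically controlling the comultiplication and the image of the monomials under restriction maps well enough to separate them — in characteristic $2$ there can be subtle coincidences among symmetric functions, and one must check carefully that the boundary case $i_k = 2^n - 1$, $j_1 = 2^n$ does not produce a relation. A clean way around this, which I would pursue, is to compute the full Hopf-algebra structure of $\bigoplus_q K(n)_*(BO(q))$ — it should be a polynomial-type algebra on the $b_{2i}$ and $c_{4j}$ modulo the truncation $b_{2i} = 0$ for $i \geq 2^n$ reinterpreted through the $c$'s — and then dimension-count against the Euler characteristic, which is forced by the even-degree result Theorem \ref{even}(i). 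Once the Poincaré series matches on both sides, spanning plus a dimension count gives linear independence for free, sidestepping the delicate direct computation.
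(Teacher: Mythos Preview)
Your approach differs substantially from the paper's and misses the key simplifying idea. The paper never works inductively on $q$, never analyzes $BO(2)$ via a Gysin sequence or transfer, and never checks linear independence by restriction or by a Poincar\'e-series count. Instead, everything is deduced from the single observation that, as a module over the Steenrod algebra, $\tilde H_*(BO;\Z/2)=\bigoplus_{j\ge 1}M_j$ where $M_j$ is spanned by length-$j$ monomials $b_{i_1}\cdots b_{i_j}$, and this splitting restricts to $\tilde H_*(BO(q))=\bigoplus_{j\le q}M_j$ and to $\tilde H_*(MO(q))=M_q$ (via the Mitchell--Priddy stable splitting). Since the first differential in the Atiyah--Hirzebruch spectral sequence is the Milnor primitive $Q_n$, it respects this decomposition; and since $K(n)_*(BO)$ is already known (from \cite{RWY}) to collapse at $E_3$ with result $K(n)_*[b_2,\ldots,b_{2^{n+1}-2}]\otimes K(n)_*[b_{2i}^2:i\ge 2^n]$, one simply reads off the $Q_n$-homology of each $M_j$ by sorting the monomials of this known answer by length. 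The element you call $c_{4i}$ is just $b_{2i}^2$ in the AHSS; it visibly lies in $M_2$, hence lifts to $K(n)_*(BO(2))$, and that is the entire argument for (ii). Spanning and linear independence for (iii) and (iv) come for free because one is partitioning an already-computed basis of $K(n)_*(BO)$.

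A couple of your proposed steps are shaky on their own terms. In (iv) you invoke a Thom isomorphism for $MO(q)$ in Morava $K$-theory; at $p=2$ the universal real bundle over $BO(q)$ is not $K(n)$-orientable, so there is no such isomorphism in the usual sense --- the paper uses the Mitchell--Priddy splitting $BO(q)\simeq BO(q-1)\vee MO(q)$ instead, which you mention as an alternative but should promote to the main mechanism. Your construction of the $c_{4i}$ in (ii) via an ``$S^1$-bundle Euler-type class'' is also off: the classes are not powers of a single degree-$4$ class, and the Gysin/transfer route you sketch would require exactly the kind of control over differentials that the $M_j$-splitting gives you for free. Your fallback dimension count is in principle valid --- once $K(n)_*(X)$ is known to be even, its $K(n)_*$-rank in each total degree is forced by the alternating sum of $\dim H_*(X;\Z/2)$ --- but carrying it out here is more work than the paper's one-line partition of $K(n)_*(BO)$.
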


In \cite{WSW:BO}, it was shown that
\begin{equation}
\label{alg}
BP^*(BO(q)) \simeq BP^*[[c_1,c_2,\ldots,c_q]]/(c_1 - c_1^*, c_2 - c_2^*,
\ldots,c_q - c_q^*),
\end{equation}
where $c_j$ is the Conner-Floyd Chern class and $c_j^*$ is its 
complex conjugate.
In \cite{KY}, Kono and Yagita show that $BP^*(BO(q))$ is Landweber
flat and that
\begin{equation}
K(n)^*(BO(q)) \simeq K(n)^* \widehat{\otimes}_{BP^*} BP^*(BO(q)).
\end{equation}
This shows that the Morava $K$-theory is even degree.
We have computed Morava $K$-theory directly to show it is
even degree, so the results of \cite{RWY} also give us
Landweber flatness for $BP^*(BO(q))$.  Either approach
gives us:
\begin{equation}
\label{cohomology}
K(n)^*(BO(q)) \simeq K(n)^*[[c_1,c_2,\ldots,c_q]]/(c_1 - c_1^*, c_2 - c_2^*,
\ldots,c_q - c_q^*).
\end{equation}

This is a purely algebraic construct that looks nothing
like the answer given in this paper.  In Section 5 we reconcile
it with our direct computation of $K(n)_*(BO(q))$ by finding
a basis for it that is consistent with what we find
for $K(n)_*(BO(q))$.

We review some facts about the standard homology of $BO(q)$
in Section 2 and prove the details of Theorem \ref{detail}
in Section 4.

\section{The standard homology of $BO(q)$ and $MO(q)$}

We begin with some review of basic facts about the homology of
$BO$ and $BO(n)$.  All of our (co)homology will be with $\Z/2$
coefficients.  We start with elements
$$
b_i \in \tilde{H}_i(RP^\infty = BO(1)) \quad i > 0.
$$
We have
$$
\tilde{H}_*(BO(1)) = \Z/2\{b_i:i > 0\}
$$
and maps
$$
BO(1) \rightarrow \cdots \rightarrow BO(q-1) \rightarrow
BO(n) \rightarrow \cdots \rightarrow BO.
$$
The image of the above $b_i$ in $H_*(BO)$ give us the well-known
homology of $BO$ as a polynomial algebra:
$$
H_*(BO) = \Z/2[b_1,b_2,\ldots]. 
$$

We also have the usual maps
\begin{equation}
\label{product}
BO(q) \times BO(k) \longrightarrow BO(q+k).
\end{equation}
For homology we only need 
$$
\prod^q BO(1) \longrightarrow BO(q).
$$
Because $b_i b_j = b_j b_i$, we have elements:
$$
b_{i_1}
b_{i_2}
\cdots
b_{i_k}
\in \tilde{H}_*(BO(q))
\quad
\mbox{for}
\quad
0 < k \le q
\quad
\mbox{and}
\quad
0 < i_1 \le i_2 \le \cdots \le i_k.
$$
These elements form a basis for the reduced homology
of $BO(q)$. 

As an aside, if that is not commonly understood, we
can quickly use the better known cohomology of $BO(q)$ to
see that the size is right.
We have
$$
H^*(BO(q)) = \Z/2[w_1,w_2,\ldots,w_q],
$$
a polynomial algebra on the Stiefel-Whitney classes.
If, by induction, we know $H_*(BO(q-1))$, all we have
to do 
to see the size is right
is show that the elements with $k = q$ above are
in one-to-one correspondence with the ideal generated
by $w_q \in H^*(BO(q))$.  That correspondence is easily
given by
$$
0 < i_1 \le i_2 \le \cdots \le i_q \quad \mbox{ goes to } \quad
w_q^{i_1} w_{q-1}^{i_2 - i_1} w_{q-2}^{i_3-i_2} \cdots w_1^{i_q - i_{q-1}}.
$$

The Steenrod algebra operates on the mod 2 homology of $BO$ and $BO(q)$.
As an element of the Steenrod algebra operates on an element
$
b_{i_1}
b_{i_2}
\cdots
b_{i_k}
$,
it does not alter the number of $b$'s, so we can define:
$$
M_q = \Z/2\{
b_{i_1}
b_{i_2}
\cdots
b_{i_q}
\}
\quad
\mbox{for}
\quad
0 < i_1 \le i_2 \le \cdots \le i_q
$$
and we get the reduced homology
\begin{equation}
\label{equ1}
\tilde{H}_*(BO(q)) = \bigoplus_{j=1}^q M_j
\end{equation}
and
\begin{equation}
\label{equ2}
\tilde{H}_*(BO) = \bigoplus_{j=1}^\infty M_j
\end{equation}
as modules over the Steenrod algebra.

From \cite{MitPrid} we know that stably $BO(q) \simeq \Wedge_{1\le i \le q}
MO(i)$, so, stably, $BO(q) \simeq BO(q-1) \Wedge MO(q)$.  From this we
see that $M_q = H_*(MO(q))$.

\section{The Morava $K$-theories of $BO(q)$ and $MO(q)$ are even}

The first differential in the Atiyah-Hirzebruch spectral sequence (AHSS),
$H_*(X;K(n)_*)$, is just the Milnor primitive, $Q_n$, which is
easy to evaluate in $H_*(BO(1))$ as it just takes $b_{2k}$ to
$b_{2k+1-2^{n+1}}$, as long as $2k > 2^{n+1}-1$.

\begin{remark}

After the first differential, the AHSS collapses for 
$K(n)_*(BO(1))$ because the AHSS is even degree.
The reduced homology is
$K(n)_*$ free on $\{b_2,b_4, \ldots, b_{2^{n+1}-2} \}$.

\end{remark}

\begin{remark}

More interesting is that
after the first differential for $BO$ we are also done, with the
polynomial
result, from the AHSS:
$$
K(n)_*(BO) \simeq K(n)_*[b_2,b_4,\ldots,b_{2^{n+1}-2}]\otimes 
K(n)_*[b_{2i}^2 : i \ge 2^n ],
$$
which was done in \cite{RWY}.  The differential, or as we prefer to say,
the $Q_n$ homology, is computed by pairing up what is missing above
as
$$
P(b_{2i+1})\otimes E(b_{2i+2^{n+1}}).
$$ 
Each of these has trivial $Q_n$ homology.
This collapses after this first differential because it is even degree.
Since $b_{2i}$, $i \ge 2^n$, is not an element, the notation is
misleading.  Later, we will give this generator the name $c_{4i}$.
The element exists in $k(n)_*(BO)$ and reduces to $b_{2i}^2$ in $H_*(BO)$.

\end{remark}

\begin{proof}[Proof of Theorem \ref{even}]
Now we know that the first differential of the AHSS is all it takes
to get $K(n)_*(BO)$ and see that it is all in even degrees. 
The first differential is just
an operation from the Steenrod algebra, $Q_n$.  
By Equation \eqref{equ2}, we must have the $Q_n$ homology of each $M_j$
in even degrees.  From this we see that $K(n)_*(BO(q))$
and $K(n)_*(MO(q))$
must be in even degrees, and by standard Morava $K$-theory duality,
$K(n)^*(BO(q))$ is in even degrees.  This completes the
proof of Theorem \ref{even}.
\end{proof}

\section{The details of the Morava $K$-theories of $BO(q)$ and $MO(q)$ }

All of the homology of $BO(q)$ came from products of elements from
$BO(1)$.  For Morava $K$-theory we have to use elements from $BO(2)$
as well.

Two kinds of elements in $K(n)_*(BO(2))$ come from $K(n)_*(BO(1))$.
First we have the image coming from the map $BO(1) \rightarrow BO(2)$,
i.e.
 $K(n)_*\{b_2,b_4, \ldots, b_{2^{n+1}-2} \}$.
Our second kind comes from the product, $BO(1)\times BO(1) \rightarrow
BO(2)$, which gives:
$$
K(n)_*\{b_{2i_1}b_{2i_2}\} \quad 0 < i_1 \le i_2 < 2^n. 
$$
There are more elements that come from $M_2$ in $K(n)_*(BO(2))$.
In particular, from the computation of $K(n)_*(BO)$ we know that
all $b_{2j}^2$ survive. 
These elements live in $M_2$ so actually survive to
$K(n)_*(BO(2))$.  Consequently, between $K(n)_*(BO(1))$ and
$K(n)_*(BO(2))$, we have all the multiplicative generators
of $K(n)_*(BO)$.  We easily see which $M_q$ these multiple products
live in by the number of $b$'s.

We can now pretty much read off the description of a basis
for $K(n)_*(BO(q))$.  To make the description a little easier
to read, we can consider the part that comes from $M_q$ and
call it $M_q^K = K(n)_*(MO(q))$.  Then we have:
$$
K(n)_*(BO(q)) \simeq K(n)_*(BO(q-1)) \bigoplus K(n)_*(MO(q)).
$$

We are not using the splitting from \cite{MitPrid} to compute
$K(n)_*(BO(q))$, only to compute $K(n)_*(MO(q))$.

Let's give new names to the elements in $K(n)_*(BO(2))$ represented
by $b_{2j}^2$ so we won't have the non-existent product hanging
around.  Let's set $c_{4j} = b_{2j}^2$ for $j \ge 2^n$.

We can now give an explicit description of $M_q^K = K(n)_*(MO(q))$. 
$$
M_q^K \simeq K(n)_*\{b_{2i_1}b_{2i_2}\ldots b_{2i_k}
c_{4j_1}c_{4j_2}\ldots c_{4j_m}\}
\quad
k+2m=q.
$$
$$
0 < i_1 \le i_2 \le \ldots \le i_k < 2^n
\le j_1 \le j_2 \le \ldots \le j_m
$$

This completes the proof of Theorem \ref{detail}.

There is still one bit of unaccounted for structure that we should
mention.  Although $K(n)_*(BO(q))$ is not an algebra, it is a
coalgebra.  The coalgebra structure for the $b$'s comes from 
$BO(1)$, so, for $p < 2^n$, we get
$$
\psi(b_{2p}) = \sum_{i+j=p}  b_{2i} \otimes b_{2j}.
$$
The $c_{4j}$ are written in terms of the $b$'s in the AHSS, so
we also know their coproduct modulo $(v_n)$.
It would just be
$$
\psi(c_{4p}) = \psi(b_{2p}^2) =  \sum_{i+j=p}  b_{2i}^2 \otimes b_{2j}^2
\qquad \mod (v_n).
$$
If $i \ge 2^n$, replace $b_{2i}^2$ with $ c_{4i}$.  Do the same with $j$.
We can work modulo $(v_n)$ because this single differential also
computes $k(n)_*(BO(q))$ where we only have non-negative powers of 
$v_n$.

We know that $K(n)_*(BO) \subset K(n)_*(BU)$.  In \cite{KLW}, there
are elements of $K(n)_*(BU)$ named $z_q$ that are our
$c_{4(2^n+q)}$.  In \cite[Theorem 3.14]{KLW}, the $z_q$ are computed
in terms of $K(n)_*(BU)$ modulo $(v_n^2)$, and their complexity, and
consequently the complexity of the coproduct, shows up here already.
This is to be expected given the complexity of the dual algebra structure
from Equation \eqref{cohomology}.

\section{Reconciliation}

The map $BO(q) \rightarrow BU(q)$ automatically gives a map of the
algebraic construct on the right side of Equation \eqref{alg} to
$BP^*(BO(q))$.  The work of
\cite{WSW:BO}
first involves showing the map is surjective, which is done with the
Adams spectral sequence.  To show injectivity, the algebraic
construct is analyzed.  We can use that analysis here to show
what we want.  We have to establish some notation first.

We have $BP^*(CP^\infty) \simeq BP^*[[x]]$, $x \in BP^2(CP^\infty)$ and
$$
\xymatrix{
BP^*(\prod^q CP^\infty) & \simeq & BP^*[[x_1,x_2,\ldots,x_q]] \\
\cup & & \cup \\
BP^*(BU(q)) & \simeq & BP^*[[c_1,c_2,\ldots,c_q]]
}
$$

The inclusion is given by all of the symmetric functions, which are
generated by the elementary symmetric functions given by the $c_k$.

For $I = (i_1,\ldots,i_q)$, let $x^I = x_1^{i_1} \ldots x_q^{i_q}$. 
Two monomials are equivalent if some permutation of the $x_i$ takes 
one to the other.  Define the symmetric function
$$
s_I = \sum x^I
$$
where the sum goes over all monomials equivalent to $x^I$.  The elementary
symmetric function is $c_k = \sum x_1 \ldots x_k$.

Theorem 1.30 of \cite[page 358]{WSW:BO} computes $c_k^*$ for $BP$ as
$$
c_k^* = (-1)^k c_k + \sum_{i > 0} 
v_i s_{2^i,\underbrace{1,1,\ldots,1}_{k-1}} \quad \mod J^2
$$
where $J=(2,v_1,v_2,\ldots)$. 
We know that the generators of $BP^*(BO(q))$ all map non-trivially
to the cohomology $H^*(BO(q))$.  As a result, we can look at
this relation using 
only the coefficients of $k(n)^* = \Z/2[v_n]$ and consider the
relation modulo $(v_n^2)$. 
Inductively,
the only relation we need is $k=q$. This reduces to
$$
c_q - c_q^* = v_n s_{2^n,\underbrace{1,1,\ldots,1}_{q-1}} \qquad \mod (v_n^2).
$$
Note that for $BU(q)$, our relation is
divisible by $c_q = x_1 \ldots x_q$, i.e. 
$$
s_{2^n,1,1,\ldots,1} = c_q s_{2^n-1}.
$$
Because $K(n)^*(BU(q))\simeq K(n)^* \widehat{\otimes} BP^*(BU(q))$, 
we can be quite sloppy with our powers of $v_n$ because we are
going to invert $v_n$ to get our algebraic description in the end.
The degree of $v_n$ is negative, so the more powers of $v_n$, the
higher the degree of the symmetric function.

The following theorem will reconcile our two different descriptions
of $K(n)^*(BO(q))$.

\begin{thm}
A basis for
$ K(n)^*[[c_1,\ldots,c_q]]/(c_1 - c_1^*, 
\ldots,c_q - c_q^*)$
in terms of symmetric functions is given by
$$
s_{IJ} = \sum x_1^{i_1}\ldots x_m^{i_m} x_{m+1}^{j_1}\ldots x_{m+p}^{j_p}
$$
where $0 < i_1 < \ldots < i_m < 2^n$ and $0 \le j_1 \le \ldots \le j_p$ with
$j_{2i-1} = j_{2i}$.
\end{thm}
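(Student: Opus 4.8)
The plan is to realize $R := K(n)^*[[c_1,\dots,c_q]]/(c_1-c_1^*,\dots,c_q-c_q^*)$ directly as a quotient of the symmetric power series ring $\Lambda := K(n)^*[[x_1,\dots,x_q]]^{S_q}$. Under the standard identification $K(n)^*[[c_1,\dots,c_q]]\cong\Lambda$ with $c_k\leftrightarrow e_k$ (the $k$-th elementary symmetric polynomial), the conjugate $c_k^*$ becomes $e_k^*:=e_k(\bar x_1,\dots,\bar x_q)$ with $\bar x_i=[-1]_F(x_i)$; thus $R=\Lambda/\mathcal I$ with $\mathcal I=(e_1-e_1^*,\dots,e_q-e_q^*)$, and the monomial symmetric functions $s_\lambda$ (over partitions $\lambda$ with at most $q$ parts) are a topological $K(n)^*$-basis of $\Lambda$. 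The proposed $s_{IJ}$ are precisely those $s_\lambda$ in which every part $\geq 2^n$ occurs with even multiplicity, with $I$ recording the parts of odd multiplicity — necessarily $<2^n$, hence distinct and strictly increasing — and $J$ the paired-off remainder. So it is enough to show that $\Lambda$ splits topologically as $\overline{\bigoplus_{I,J}K(n)^*\,s_{IJ}}\oplus\mathcal I$; equivalently, that the $s_{IJ}$ span $R$ and are $K(n)^*$-independent in $R$.

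First I would pin down the relations. By \cite[Theorem 1.30]{WSW:BO}, after base change $BP^*\to K(n)^*$ (which kills $2$ and every $v_i$ with $i\neq n$), one has $e_k-e_k^*\equiv v_n\,s_{2^n,1^{k-1}}$ modulo terms of strictly higher $v_n$-order, and $s_{2^n,1^{k-1}}=e_k\,p_{2^n-1}-s_{2^n-1,1^{k}}$ with $p_{2^n-1}=\sum_i x_i^{2^n-1}$; for $k=q$ the last term vanishes, so $e_q-e_q^*\equiv v_n\,e_q\,p_{2^n-1}$, and $e_q^*$ is itself divisible by $e_q$ since $[-1]_F(x)=x\cdot(\text{unit})$. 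The crucial point is that $v_n$ has negative cohomological degree, so although $v_n$ is a unit and these congruences cannot be read as literal identities $s_{2^n,1^{k-1}}=0$, they do say that $s_{2^n,1^{k-1}}$ lies in $\mathcal I$ modulo symmetric functions of strictly larger polynomial degree. Filtering $\Lambda$ by polynomial degree, I expect $\gr\,\mathcal I$ to be the homogeneous ideal generated by $\{s_{2^n,1^{k-1}}:1\leq k\leq q\}$, and I would establish this through the induction below rather than by a head-on Gr\"obner argument.

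The induction is on $q$, which is what the paper means by ``the only relation we need is $k=q$''. Setting $x_q=0$ gives a surjection $\Lambda^{(q)}\twoheadrightarrow\Lambda^{(q-1)}$ with kernel $e_q\Lambda^{(q)}\cong\Lambda^{(q)}$ (via $s_\mu\mapsto e_q s_\mu=s_{\mu+(1^q)}$); it carries $e_k-e_k^*$ to the analogous relation in $q-1$ variables for $k<q$ and kills $e_q-e_q^*$, inducing $R^{(q)}\twoheadrightarrow R^{(q-1)}$ whose kernel is the image of $e_q\Lambda^{(q)}$. On that kernel the top relation reads $e_q\cdot(1-\prod_i u(x_i))=0$, multiplication by a series with leading symmetric-function term $v_n\,p_{2^n-1}$; dividing out $e_q$, the kernel is $\Lambda^{(q)}$ modulo $p_{2^n-1}$ together with the pulled-back relations of $R^{(q-1)}$, which I would match with the topological splitting $K(n)^*(BO(q))\cong K(n)^*(BO(q-1))\oplus K(n)^*(MO(q))$, the second summand being exactly the $e_q$-multiples. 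On the associated graded I would then run a straightening algorithm: use $p_{2^n-1}$ and its multiples to trade a part $\geq 2^n$ of odd multiplicity for a part $<2^n$ plus corrections of higher polynomial degree, iterating until only admissible $s_{IJ}$ remain. Surjectivity of this process gives spanning, distinctness of monomial symmetric functions gives independence on $\gr$, and the filtration argument lifts both back to $R$; the new basis elements appearing at stage $q$ (those with exactly $q$ parts, i.e.\ $m+p=q$) form a basis of $K(n)^*(MO(q))$, reconciling with Theorem \ref{detail}(iv).

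The hard part will be the straightening step together with a rigorous treatment of the $v_n$-filtration: one must check that the higher-order $v_n$-corrections in the relations — which, since $v_n$ has negative degree, only push things into higher polynomial degree — can never produce a relation among the admissible $s_{IJ}$, and that the reduction procedure on $\gr$ (repeatedly clearing odd-multiplicity parts $\geq 2^n$ via $p_{2^n-1}$) both terminates and never stalls short of an admissible expression. As a consistency check on all the bookkeeping I would compare the Poincar\'e series of $\overline{\bigoplus_{I,J}K(n)^*\,s_{IJ}}$ with that of the explicit homology basis of Theorem \ref{detail}, using Morava $K$-theory self-duality: the two index sets are in an evident degree-preserving bijection (the odd-multiplicity parts $<2^n$ correspond to the $b_{2i}$, the paired parts to the $c_{4j}$), so once spanning is known, matching ranks forces a basis.
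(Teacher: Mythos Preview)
Your proposal is correct and follows essentially the same route as the paper: induction on $q$ via the $c_q$-divisible (i.e.\ $MO(q)$) summand, then an inner induction on polynomial degree using the relation $s_{2^n,1^{q-1}}=c_q\,p_{2^n-1}\equiv 0$ modulo higher degree to straighten every $s_\lambda$ with a part $\geq 2^n$ of odd multiplicity into admissible $s_{IJ}$'s, with independence supplied not intrinsically but by the rank match against Theorem~\ref{detail}. The paper is more explicit than your sketch at exactly the point you flag as ``the hard part'': rather than working on an associated graded, it handles the reduction by a direct case split ($i_m>2^n$ via dividing and remultiplying by $c_q$, $i_m=2^n$ via multiplying $s_{2^n,1^{q-1}}$ against a suitable $s_{I'J'}$) and disposes of the $v_n$-bookkeeping by observing that all corrections live in strictly higher polynomial degree, hence converge in the power-series topology.
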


\begin{remark}
The definition forces $p$ to be even.  If we drop the $i_m < 2^n$
condition, any $s_K$ can be written in this form.  First, just find all
the pairs of equal exponents and create $J$.  Finding $I$ is easy after
that.
\end{remark}

\begin{remark}
All we do in our proof is reduce arbitrary elements to those
in our theorem.  Because we know $K(n)_*(BO(q))$, we know that
there can be no further reduction, so this is a basis.  This
does reconcile the two descriptions though.
\end{remark}

\begin{proof}
The proof is by double induction.  First, it is by induction on $q$.
This is easy to start with $q=1$ where the result is well known and
straightforward, but worth talking about anyway as it illustrates things
to come in the proof.

The relation in $k(n)^*(BU(1))$ that gives $k(n)^*(BO(1))$ and then 
$K(n)^*(BO(1))$
is just $0 = c_1 - c_1^* = v_n s_{2^n} = v_n x^{2^n}$
modulo $(v_n^2)$.  
The induction is on
the degree of the symmetric function, which in this case is just powers
of $x$.
Inverting $v_n$, we see that $x^{2^n}$ is zero modulo higher powers of $x$.

For any $s_{2^n+k} = x^{2^n+k}$, we have 
$$
0 = s_{2^n} s_k = s_{2^n + k} \mod \text{ higher powers of $x$}.
$$
That is, each $s_{2^n+k}$ is zero modulo higher degree symmetric products.
By induction on the degree of the symmetric product (i.e. induction on  $k$)
we push the relation to higher and higher degrees.  In the topology
on $K(n)^*(BU(1)) \simeq K(n)^*[[x]]$, this converges to 
zero, and so each $s_{2^n+k}$, $k \ge 0$,
is really zero.  We remind the reader 
that our relation isn't really $s_{2^n,1,\ldots,1}=0$ 
modulo higher degree symmetric functions.  The relation has a $v_n$ in
front.  Since our relation really is in $k(n)^*(-)$ because it
comes from $BP^*(-)$, all powers of $v_n$
are positive.  Since we are going to invert $v_n$ in the end in order to get
$K(n)^*(-)$, we can be quite loose with our $v_n$'s.

The same thing will happen in the general, arbitrary $q$, case.  However,
for $q > 1$, there are non-trivial basis elements in high degrees, so
this process doesn't have to go to zero in the limit, but could settle on
a basis element.  Either way, it works for our proof.

From our induction on $q$, we assume the result for $q-1$.  
Stably, $BO(q) \simeq BO(q-1) \Wedge MO(q)$ from \cite{MitPrid} as
well as $BU(q) \simeq BU(q-1) \Wedge MU(q)$.
From
\cite{WSW:BO}, we know that $BP^*(MO(q))$ is the ideal in $BP^*(BO(q))$
generated by $c_q$, and so the same is true for $K(n)^*(BO(q))$.  
Of course, the same is true for $BP^*(MU(q))$, $BP^*(BU(q))$, and
$K(n)^*(BU(q))$.
Consequently,
we can focus our attention on the symmetric functions divisible by
$c_q$ when there are only $q$ variables.

We know that $H^*(BU(q))$ is free on the symmetric functions $s_I$
with $I= ( i_1,\ldots,i_q)$.  If all $i_k > 0$, this is a basis
for $H^*(MU(q))$ and if some are not greater than 0, they are part of
the basis for $H^*(BU(q-1))$.  This splitting is only additive, not
multiplicative.
Because there is no torsion, this is all true for $BP^*(-)$, $k(n)^*(-)$
and $K(n)^*(-)$ as well.

Our next induction is on the degree of the symmetric functions.  We
will show that elements not of the form in our theorem are zero
modulo higher degree elements.  
We know that $K(n)^*(BO(q))$ is $K(n)^*(BU(q))$ modulo the
relations already described and that $K(n)^*(BU(q))$ is just
given by the usual symmetric functions.  
To prove our result, we will not mod
out our relations, but work with $BU(q)$ and just 
describe how the relations accomplish
what we want.
This will suffice for our purposes.
We begin our induction by noticing that all elements in
degrees less than the degree of 
$s_{2^n,1,\ldots,1}
=c_q s_{2^n-1}$
are in our desired basis. The only element 
in the degree of 
$s_{2^n,1,\ldots,1}$
not in the basis
is our relation element, which is zero modulo higher degree symmetric
functions (ignoring the $v_n$ as discussed above).

An arbitrary element not of the form in the theorem simply has $i_m \ge 2^n$
instead of $i_m < 2^n$.  Having fixed a degree, we first consider the
cases where $i_m = 2^n + k$, with $k > 0$.  Since we are working with
elements divisible by $c_q$, we can divide by $c_q$ to get a new symmetric
function, $s_{I'J'}$, with each $i_s$ replaced by $i_s - 1$ and the same for the
$j_s$.  This symmetric function has $i_m' = 2^n +k -1$.  Since $k > 0$,
this is known to be zero modulo higher degrees by our induction on
degree.  
Multiplying by
$c_k$ to get our original symmetric function, we see it must be zero
modulo higher degrees.
Note that we are using our induction on $q$ here.  If $i_1$ or $j_1$ 
(or both) are equal to $1$, then $s_{I'J'}$ is in $K(n)^*(BU(q-1))$
because it is not divisible by $c_q$.  By our induction, we know the
behavior of the relations here.

In our fixed degree, we have eliminated all of the bad elements
except those with $i_m = 2^n$.  From such a symmetric function
$s_{IJ}$, we create a new symmetric function $s_{I'J'}$ by
eliminating the $x_m^{i_m}=x_m^{2^n}$ term and subtracting $1$ from 
from all of the other $i_s$ and $j_s$.  We want to analyze
$$
s_{2^n,1,1,\ldots,1}s_{I'J'}.
$$
Since 
$s_{2^n,1,1,\ldots,1}$ 
is zero modulo higher degrees, this
product is too.  Multiplying symmetric functions can be tricky because
the result can be a sum of symmetric functions.
The easy one to deal with is when $i_1$ and $j_1$ are greater than one
(recall that $m+p=q$). 
In this case, if your $x^{2^n}$ term is multiplied by any power of $x$,
we are in the situation where our product has $x^{2^n + k}$, with $k >0$,
and we have dealt with those terms already.  The only thing left
is to multiply the $x^{2^n}$ back into the place it was removed from
and then all of the other exponents are raised by 1, giving us
back our original $s_{IJ}$.

Things are slightly more complicated if $i_1$ or $j_1$ is 1.  
(They must be at least 1 because everything is divisible by $c_q$.)
Again, if our $x^{2^n}$ is multiplied by a non-zero power of $x$, 
we get $x^{2^n+k}$ and these terms have been handled already.
Our $x^{2^n}$ must hit an $x^0$ term, but by the definition of
symmetric functions, these are all equivalent, so the other $x_i$ all
just have their exponent raised by 1 in our product 
and we get our $s_{IJ}$ back,
showing it is zero modulo higher degrees.
\end{proof}


\end{document}